
 

\documentclass[final]{amsart}


\usepackage[utf8]{inputenc} 



\usepackage[foot]{amsaddr} 
\usepackage{verbatim} 
\usepackage{amssymb}
\usepackage{amsthm}
\usepackage{amsmath}
\usepackage{amsfonts}
\usepackage{latexsym}
\usepackage{mathtools} 
\usepackage{enumitem} 
\usepackage{cite} 
\usepackage[english]{babel} 

\usepackage{esint}

\usepackage{calrsfs}


    \usepackage[backref=none]{hyperref} 
     \usepackage[usenames,dvipsnames]{color}
     	\definecolor{columbiablue}{rgb}{0.61, 0.87, 1.0}	\definecolor{richblack}{rgb}{0.0, 0.25, 0.25}
\definecolor{pennblue}{RGB}{1,31,91} 
\definecolor{pennred}{RGB}{153,0,0}	  
     \definecolor{MyDarkBlue}{rgb}{0,0.1,0.7}



\usepackage[color]{showkeys}


\usepackage{color}

\theoremstyle{plain}
\newtheorem{theorem}{Theorem}
\newtheorem{lemma}[theorem]{Lemma}

\newtheorem{proposition}[theorem]{Proposition}
\newtheorem{remark}[theorem]{Remark}

\newtheorem{corollary}[theorem]{Corollary}

\numberwithin{equation}{section}
\numberwithin{theorem}{section}



\allowdisplaybreaks


\newcommand{\eqdef }{\overset{\mbox{\tiny{def}}}{=}}
\newcommand{\rth}{{\mathbb{R}^3}}






\title[Propagation of $L^p$ estimates]{Propagation of $L^p$ estimates for the Spatially Homogeneous Relativistic Boltzmann Equation}

%

\author[J. W. Jang]{Jin Woo Jang$^\dagger$}
\address{$^\dagger$Center for Geometry and Physics, Institute for Basic Science (IBS), Pohang 37673, Republic of Korea. \href{mailto:jangjinw@ibs.re.kr}{jangjinw@ibs.re.kr} }

\author[S.-B. Yun]{Seok-Bae Yun$^\ddagger$}
\address{$^\ddagger$Department of mathematics, Sungkyunkwan University, Suwon 16419, Republic of Korea. \href{mailto:sbyun01@skku.edu}{sbyun01@skku.edu} }

%
%

\AtBeginDocument{%
   \def\MR#1{}
}

\begin{document}


\date{\today}

\let\thefootnote\relax\footnotetext{2010 \textit{Mathematics Subject Classification.} Primary: 35Q20, 76P05, 82C40,
	35B65, 83A05. \\ 
	\textit{Key words and phrases.}  Special relativity, Boltzmann equation, $L^p$ estimates, Carleman representation.}
\addtocounter{footnote}{-1}\let\thefootnote\svthefootnote

\begin{abstract}
In this paper, we prove the propagation of $L^p$ upper bounds for the spatially homogeneous relativistic Boltzmann equation for any $1<p<\infty$. We consider the case of relativistic \textit{hard ball} with Grad's angular cutoff. Our proof is based on a detailed study of the interrelationship between the relative momenta, the regularity and the $L^p$ estimates for the gain operator, the development of the relativistic Carleman representation, and several estimates on the relativistic hypersurface $E^{v_*}_{v'-v}$. We also derive a Pythagorean theorem for the \textit{relative momenta} $g(v,v_*),$ $g(v,v')$, and $g(v',v_*)$, which has a crucial role in the reduction of the momentum singularity.
\end{abstract}

\setcounter{tocdepth}{1}

\maketitle
\tableofcontents

\thispagestyle{empty}

\section{Introduction}The relativistic Boltzmann equation is a central model in the kinetic theory of gases for the dynamics of fast moving particles whose speed is comparable to the speed of light. The equation is a generalized model of the classical Boltzmann equation and is based on Einstein's theory of special relativity.  
The spatially homogeneous relativistic Boltzmann equation reads
\begin{equation}
\label{homorelbol}
\frac{\partial f}{\partial t}(t,v) = Q(f,f)(t,v),\quad v\in \rth, \quad t\geq 0,
\end{equation}
where the unknown $f=f(t,v)$ is a probability density function on $\rth$ at time $t$. 

 The collision operator $Q(f,f)$ is a quadratic bilinear operator defined as
\begin{equation}
\label{collisionQ}
Q(f,h)(t,v)\eqdef  \int_\rth dv_*\int_{\mathbb{S}^2}d\omega\ v_{\o} \sigma(g,\theta) [f(t,v')h(t,v_*')-f(t,v)h(t,v_*)],
\end{equation}in the \textit{center-of-momentum} framework, where $\sigma(g,\theta)$ is a scattering kernel (or is sometimes called collision cross-section) that depends only on the \textit{relative momentum} $g$ and the scattering angle $\omega$, and $v_{\o}$ is the M\"oller velocity defined by $$v_{\o}=v_{\o}(v,v_*)=\sqrt{\left|\frac{v}{v^0}-\frac{v_*}{v_*^0}\right|^2-\left|\frac{v}{v^0}\times \frac{v_*}{v_*^0}\right|^2}=\frac{g\sqrt{s}}{v^0v^0_*}.$$ Here, $v^0$ stands for the energy of a relativistic particle with momentum $v$ and is defined as $v^0\eqdef \sqrt{1+|v|^2}$, where, for the sake of simplicity, we normalize the speed of light and the rest mass to be 1. We define $v_*^0$, $v'^0$, and $v'^0_*$ in a similar manner. The definition implies that $v_{\o}\lesssim 1.$ 

The notations $g$ and $s$ stand for the \textit{relative momentum} and the square of energy in the \textit{center-of-momentum} frame and are defined by
\begin{equation}\label{g}
g=g(v,v_*)\eqdef \sqrt{-(v^0-v^0_*)^2+|v-v_*|^2}\geq 0,
\end{equation} and 
\begin{equation}\label{s}
s=s(v,v_*)\eqdef (v^0+v^0_*)^2-|v+v_*|^2.
\end{equation} Then we have the identity $s=g^2+4.$ We also define the notations $\bar{g}\eqdef g(v,v')$, $\tilde{g}\eqdef g(v',v_*)$, $\bar{s}\eqdef s(v,v')$, and $\tilde{s}\eqdef s(v',v_*)$ in a similar manner.

In the definition of the collision operator $Q(f,f)$, $v'$ and $v'_*$ stand for the post-collisional momenta of particles which had the momenta $v$ and $v_*$ before the collision, respectively. The equation was derived under the assumption that the momenum and the energy are conserved after each collision; i.e., \begin{equation}\label{emcons}v+v_*=v'+v'_*\quad \text{and}\quad v^0+v^0_*=v'^0+v'^0_*.\end{equation} In the \textit{center-of-momentum} frame, the post-collisional momenta can be represented as
\begin{equation}
\label{postcol}
\begin{split}
v'&\eqdef\frac{v+v_*}{2}+\frac{g}{2}\left(\omega+(\gamma-1)(v+v_*)\frac{(v+v_*)\cdot \omega}{|v+v_*|^2}\right),\\
v'_*&\eqdef\frac{v+v_*}{2}-\frac{g}{2}\left(\omega+(\gamma-1)(v+v_*)\frac{(v+v_*)\cdot \omega}{|v+v_*|^2}\right),\\
\end{split}
\end{equation}
where $\gamma\eqdef \frac{v^0+v_*^0}{\sqrt{s}}.$ In the same frame, the microscopic energy of the post-collisional momenta $v'^0$ and $v'^0_*$ are represented by
\begin{equation}
\label{postcolen}
\begin{split}
v'^0&\eqdef\frac{v^0+v_*^0}{2}+\frac{g}{2\sqrt{s}}(v+v_*)\cdot \omega,\\
v'^0_*&\eqdef\frac{v^0+v_*^0}{2}-\frac{g}{2\sqrt{s}}(v+v_*)\cdot \omega.
\end{split}
\end{equation}The scattering angle $\theta$ is defined by 
\begin{equation}\label{costheta}\cos\theta = \frac{-(v^0-v_*^0)(v'^0-v'^0_*)+(v-v_*)(v'-v'_*)}{g^2}.\end{equation} The angle $\theta$ is well-defined \cite{Gla} together with the energy and momentum conservation laws \eqref{emcons}. 

The formal conservation laws for the mass, the momentum, and the energy of the system read 
\begin{equation}\label{conlaw}
\int_\rth  \begin{pmatrix}1\\v\\v^0 \end{pmatrix} f(t,v)dv=\int_\rth\begin{pmatrix}1\\v\\v^0 \end{pmatrix}f_0(v) dv.
\end{equation} Also, we mention the formal Boltzmann $H$-theorem
\begin{equation}\label{entropy.eq}
H(f(t))+\int^t_0D(f(s))ds\leq H(f_0),
\end{equation}
where the entropy functional is defined by
\begin{equation}\label{entropy.functional}
H(f(t)) = \int_\rth f(t,v)\ln f(t,v)dv.
\end{equation}
\subsection{A short list of the previous results on the relativistic collisional kinetic theory}
This section is devoted to a short list of the previous results in the relativistic collisional kinetic theory. We start with providing a list of general references on the relativistic kinetic theory: \cite{C,C-I-P,C-K,E-M-V,DeGroot,Gla,Vil02}.

For the spatially homogeneous relativistic Boltzmann equation, we have the existence theory developed in \cite{MR3169776,MR3166961}. The gain of regularity for the gain operator was shown in \cite{MR3880739}. For the spatially homogeneous Landau equation, we have the entropy dissipation estimate \cite{StrainTas} and the conditional uniqueness \cite{1903.05301}.

There have been many other varied developments in the spatially inhomogeneous relativistic Boltzmann equation. These include the exitence theory \cite{D-E3,D,GS3,GS4,GL-Vacuum,D-E0,D-E0er,D-E2,MR2378164}, the derivation of the center-of-momentum representation of the operator \cite{DeGroot,MR2765751}, the blow-up theory in the case without the loss term \cite{MR2102321}, the Newtonian limit of the equation \cite{Cal,MR2679588}, and the regularizing effect of the operator \cite{MR1402446, MR3880739, MR1450762}. For the existence theory on the relativistic Landau and the relativistic Boltzmann equation coupled with the Maxwell equations, we remark the work of \cite{Guo-Strain3, Guo-Strain2}. There are also some results on the relativistic BGK-type models \cite{MR2988960,MR3300786,1801.08382,1811.10023,Pennisi_2018,H-R-Yun}.

\subsection{Hypothesis on the collision cross-section}
The relativistic collision cross-section $\sigma(g,\theta)$ is a non-negative function which only depends on the relative velocity $g$ and the scattering angle 
$\theta$. We assume that $\sigma$ takes the form of the product in its arguments; i.e., $$\sigma(g,\theta)\eqdef \Phi(g)\sigma_0(\theta).$$ In general, we suppose that both $\Phi$ and $\sigma_0$ are non-negative functions.

Without loss of generality, we may assume that the collision kernel $\sigma$ is supported only when $\cos\theta\geq 0$ throught this paper;
i.e., $0\leq \theta \leq \frac{\pi}{2}$. 
Otherwise, the following \textit{symmetrization} \cite{Gla} will reduce the case:
$$
\bar{\sigma}(g,\theta)=[\sigma(g,\theta)+\sigma(g,-\theta)]1_{\cos\theta\geq 0},
$$
where $1_A$ is the indicator function of the set $A$. 
We further assume the collision kernel satisfies the following hard potential assumption:
\begin{equation}
\label{hard}
\Phi(g) =  C_{\Phi} g, \quad C_{\Phi}>0.
\end{equation}
We suppose that the angular function $\theta \mapsto \sigma_0(\theta)$ is non-negative and satisfies for some $C>0$ that
\begin{equation}
\label{angassumption}\sigma_0(\theta)=C\sin \theta, \hspace*{5mm}\forall \theta \in \Big(0,\frac{\pi}{2}\Big].
\end{equation}This is also called the relativistic \textit{hard-ball} assumption with a Grad's angular cutoff assumption. See \cite[Appendix B]{MR2679588} for more detailed physical descriptions on the collision cross-sections in relativistic kinetic theory. 

Together with the assumption on the collision cross-section, we can further split the collision operator $Q(f,f)$ into two and further define them as the gain term
\begin{equation}
\label{gain}
Q^+(f,h)\eqdef  \int_\rth dv_*\int_{\mathbb{S}^2}d\omega\ v_{\o} \sigma(g,\theta) f(v')h(v_*'),
\end{equation} and the loss term\begin{equation}
\label{loss}
Q^-(f,h)\eqdef  \int_\rth dv_*\int_{\mathbb{S}^2}d\omega\ v_{\o} \sigma(g,\theta) f(v)h(v_*),
\end{equation} where we have used the shorthand $f(v)=f(t,v)$. So, we have $Q=Q^+-Q^-$. Then the loss term can further be written as
$$Q^-(f,h)=f(v)\int_\rth dv_*\int_{\mathbb{S}^2}d\omega\ v_{\o} \sigma(g,\theta) h(v_*)=: f(v)Lh(v),$$ with a new operator $L$. \subsection{Spaces}In this section, we define the weighted $L^p$ spaces for $p\in [1,\infty)$. We define the weighted $L^p$ space $L^p_k$ as the space of functions whose $L^p_k$ norm defined as below is bounded:
$$||f||_{L^p_k}\eqdef \left( \int_\rth dv \ (v^0)^{k} |f(v)|^p\right)^{\frac{1}{p}}.$$ For $L^\infty_k$, we use the following definition:
$$||f||_{L^\infty_k}\eqdef \sup _{v\in \rth}|f(v)|(v^0)^k.$$ 
\subsection{Main results}We may now state our main theorem.
\begin{theorem}\label{main}
	Let $p\in (1,+\infty)$ and $\eta>2$. Let $f_0$ be a nonnegative function satisfying
	\begin{equation}\label{initial}
	f_0\in L^1_m\cap L^p(\rth)\quad \text{and}\quad \int_\rth f_0|\log f_0|dv <\infty,
	\end{equation} with $m=m(\eta)>1$ defined in \eqref{mdef}. Suppose the collision cross-section satisfies \ref{hard} and \eqref{angassumption}. Let the solution $f(t,v)$ to the spatially homogeneous relativistic Boltzmann equation \eqref{homorelbol} with initial datum $f_0$ have finite mass, energy, and entropy as in \eqref{conlaw} and \eqref{entropy.eq}. Then $f$ satisfies the differential inequality 
	$$\frac{d}{dt}||f||^p_{L^p}\leq C_1||f||^{q}_{L^p}-C_2||f||^p_{L^p_{1}},$$ for some $0<q<p.$
	As a consequence, we have that for all $t\geq 0$, 
	$$||f(t,\cdot)||_{L^p}\leq C_{f_0},$$ where $C_{f_0}$ depends only on $p$ and the initial conditions $||f_0||_{L^1_m}$ and $||f_0||_{L^p}$.
\end{theorem}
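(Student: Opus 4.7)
The plan is to derive the claimed differential inequality by differentiating $||f||^p_{L^p}$ in time, splitting the right-hand side into gain and loss contributions, and then showing that the loss provides the coercive quantity $||f||^p_{L^p_1}$ while the gain can be absorbed up to a lower-order power of $||f||_{L^p}$. Multiplying \eqref{homorelbol} by $p f^{p-1}$ and integrating in $v$ gives
\begin{equation*}
\frac{d}{dt}||f||^p_{L^p} = p\int_\rth f^{p-1} Q^+(f,f)\, dv - p\int_\rth f^p\, Lf\, dv,
\end{equation*}
so the two quantities to estimate are the gain integral and the loss integral.

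For the loss, the relativistic hard-ball assumptions \eqref{hard}--\eqref{angassumption} and the explicit form of the M\"oller velocity lead to
\begin{equation*}
Lf(v) = C\int_\rth f(v_*)\,\frac{g^2\sqrt{s}}{v^0 v_*^0}\, dv_*.
\end{equation*}
A careful analysis of this integrand using the identity $g^2 = 2(v^0 v_*^0 - v\cdot v_* - 1)$, which for $|v_*|$ comparable to $|v|$ and non-parallel directions yields an integrand of order $v^0$, combined with a standard non-concentration argument exploiting finite mass, energy, and entropy of $f$, produces the pointwise lower bound $Lf(v) \geq c_0 v^0$ with $c_0>0$ depending only on the initial data. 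Multiplying by $f^p$ and integrating delivers
\begin{equation*}
p\int_\rth f^p\, Lf\, dv \geq C_2\, ||f||^p_{L^p_1},
\end{equation*}
which is the coercive term of the desired inequality.

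The heart of the argument is the gain term, where the target is
\begin{equation*}
\int_\rth f^{p-1} Q^+(f,f)\, dv \leq \tfrac{C_2}{2p}\,||f||^p_{L^p_1} + C_1\,||f||^q_{L^p}
\end{equation*}
for some $0<q<p$. By H\"older this reduces to controlling $||Q^+(f,f)||_{L^p}$. Here I would invoke the relativistic Carleman representation developed in the paper, which rewrites $Q^+(f,f)$ as an iterated integral over $v_*$ and over the hypersurface $E^{v_*}_{v'-v}$. A Young-type convolution estimate on this hypersurface, combined with the Pythagorean identity among the relative momenta $g(v,v_*)$, $g(v,v')$, $g(v',v_*)$ announced in the abstract (which is what ultimately tames the kernel singularity in the Carleman weight), should yield a bound of the schematic form
\begin{equation*}
||Q^+(f,f)||_{L^p} \lesssim ||f||_{L^1_m}^{\theta}\, ||f||_{L^p_a}^{1-\theta}
\end{equation*}
for suitable $a\in(0,1)$ and $\theta\in(0,1)$. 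Multiplying by $||f^{p-1}||_{L^{p'}}=||f||_{L^p}^{p-1}$, splitting $\rth$ into regions $\{v^0\leq R\}$ and $\{v^0>R\}$, and interpolating $L^p_a$ between $L^p$ and $L^p_1$, with the $L^1_m$ moments propagated by standard hard-potential moment bounds, then converts the right-hand side into the required $\epsilon\,||f||^p_{L^p_1} + C_\epsilon\, ||f||^q_{L^p}$ form for $\epsilon$ arbitrarily small.

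Combining the two estimates produces the claimed differential inequality. Since $v^0\geq 1$ pointwise gives $||f||_{L^p}\leq ||f||_{L^p_1}$, whenever $||f||_{L^p}$ exceeds a certain threshold the coercive term dominates and the time derivative becomes negative; a standard ODE comparison argument then yields the uniform-in-time bound depending only on $||f_0||_{L^1_m}$ and $||f_0||_{L^p}$. By a large margin the main obstacle will be the gain estimate: the development of the relativistic Carleman representation, the geometric analysis on the hypersurface $E^{v_*}_{v'-v}$, and the Pythagorean identity for the relative momenta are all deployed precisely so that the $L^p$-bound on $Q^+$ can be proved in a form the loss can absorb.
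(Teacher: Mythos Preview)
Your overall strategy---differentiate $\|f\|_{L^p}^p$, bound the loss from below by $\|f\|_{L^p_1}^p$ via $Lf\gtrsim v^0$, and control the gain via H\"older and an $L^p$ estimate on $Q^+$---matches the paper, and your treatment of the loss term is exactly Lemma~\ref{lowerL}. The gap is in the gain term.

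You assert that the Carleman representation together with the Pythagorean identity will give a bound of the schematic form
\[
\|Q^+(f,f)\|_{L^p}\lesssim \|f\|_{L^1_m}^{\theta}\,\|f\|_{L^p_a}^{1-\theta},\qquad \theta\in(0,1),\ a\in(0,1).
\]
This is not what those tools deliver. Leaving aside that $Q^+$ is bilinear (so the right-hand side should be degree two in $f$), the actual output of the Carleman/Pythagorean machinery is Theorem~\ref{Lp estimate Q+}:
\[
\|Q^+(f,h)\|_{L^p}\lesssim \|f\|_{L^1_{1/2}}\|h\|_{L^p_{\eta(p-1)+1/2}},
\]
i.e.\ a \emph{full} power of $\|h\|_{L^p}$ carrying a \emph{large} weight $w=\eta(p-1)+\tfrac12>1$. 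Feeding this into $\|f\|_{L^p}^{p-1}\|Q^+(f,f)\|_{L^p}$ gives total $L^p$-exponent $p$, and since $w>1$ your proposed interpolation of $L^p_a$ between $L^p$ and $L^p_1$ is unavailable. Your $\epsilon$-absorption would then at best produce $q=p$, yielding only a Gronwall bound rather than the claimed uniform-in-time control.

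The missing idea is the \emph{gain-of-integrability} estimate $\|Q^+(f,h)\|_{L^6}\lesssim \|f\|_{L^1}\|h\|_{L^2}$ (Corollary~\ref{Gain of integrability}, coming from $H^1$ smoothing of $Q^+$), followed by Riesz--Thorin interpolation against the Carleman estimate. This yields (Corollary~\ref{Q+bound})
\[
\|Q^+(f,f)\|_{L^p}\lesssim \|f\|_{L^1_{1/2}}\|f\|_{L^n_{w'}}\quad\text{with}\quad n<p.
\]
Because $n<p$, one now interpolates $\|f\|_{L^n_{w'}}$ between $\|f\|_{L^1_m}$---uniformly bounded in time by propagation of $L^1$ moments---and $\|f\|_{L^p}$, extracting a genuine sub-power $\|f\|_{L^p}^{1-\vartheta}$ and hence $q=p-\vartheta<p$. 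No $\epsilon$-splitting against the loss is needed. In short, the Carleman representation and the Pythagorean identity are necessary but not sufficient: the smoothing of $Q^+$ is precisely the mechanism that drops the Lebesgue exponent on the second factor below $p$ and makes the differential inequality close.
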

We now briefly discuss the main theorem. Historically, the problem of showing the propagation of the uniform $L^p$ upper bound of the solution has been widely open for the relativistic Boltzmann equation. This is due to the extremely complicated structure of the representations of the post-collisional momenta \eqref{postcol} in constrast to the relatively simpler post-collisional velocities in the Newtonian case. Because of the complexity, it has been very limited to use the change of pre-post collisional variables $v\mapsto v'$ or vice versa, as the Jacobian is no longer uniformly bounded above and below in the relativistic scenario. This was studied in \cite{Jang2016} and \cite{Jang-Strain}. However, in order to deal with the dual formation like \eqref{duality}, one must find an equivalent way to the change of variables $v\mapsto v'$ in the estimate.

In order to resolve this issue, we adapt the relativistic analogue of the Carleman representation that was recently developed and used in \cite{Jang2016} and \cite{Jang-Strain-Yun}. By considering a relativistic counterpart of the Carleman representation, we have a chance to convert the integration with respect to $dv\times dv_*\times d\omega$ on $\rth\times\rth\times \mathbb{S}^2$ into the integration with respect to $dv\times dv'\times d\pi_{v_*}$ on $\rth\times \rth\times E^{v_*}_{v'-v}$ where $E^{v_*}_{v'-v}$ is a noncompact 2-dimensional hypersurface and $d\pi_{v_*}$ is the Lebesgue measure on the surface. The use of the relativistic Carleman representation inevitably involves the integration on the hypersurface $E^{v_*}_{v'-v}$ and the estimate on the nontrivial hypersurface was motivated by the work in \cite{Jang2016}. 

Unfortunately, it turned out that the use of the Carleman-type representation involves the appearance of an additional momentum singularity of $\frac{1}{g(v,v')}$ as we observe in Lemma \ref{carlemanrep}. In order to treat the additional singularity, we derive the Pythagorean theorem for the relative momenta $g(v,v_*),$ $g(v,v')$, and $g(v',v_*)$ as $$g(v,v_*)^2=g(v,v')^2+g(v',v_*)^2.$$ Then the Pythagorean identity can further be used to relate the angular part of the scattering kernel $\sigma_0(\theta)$ to $\frac{g(v,v')}{g(v,v_*)}$ via the \textit{symmetrization} \cite{Gla} and the reduction of the angular support of the collision kernel. Namely, we make use of the angular part of the scattering kernel to reduce the momentum singularity. We would like to mention that this is similar to the techniques that one creates and uses additional \textit{relative momenta} $|v-v'|$ to reduce the angular singularity of the non-cutoff \textit{inverse-power-law} Boltzmann equation the other way round, in the sense that we reduce the momentum singularity via the use of the angular part and the Pythagorean theorem. Indeed, the use of the angular part to reduce the momentum singularity was crucial for our \textit{angular cutoff} situation and, to the best of authors' knowledge, there has been no such a method used for the mathematical analysis of the Boltzmann equation.

We remark that this work generalizes the $L^p$ propagation theory in \cite{MR2081030} for the spatially homogeneous classical Boltzmann equation with an angular cutoff.  We also mention here that the weight $m$ in Theorem \ref{main} is not optimal. 
\subsection{Outline of the paper} The paper is organized as follows. In Section \ref{prelim}, we introduce preliminary lemmas for the proof of the propagation of $L^p$ bounds. This will include an introduction to the derivation of the Pythagorean theorem for the relative momenta in the relativistic collisional kinetic theory in Proposition \ref{lemma:pytha}. Based on the preliminary lemmas, we will establish an upper-bound estimate for the $L^p$ norm of the gain operator $Q^+(f,h)$ in Section \ref{Lp section}. Then the use of the Riesz-Thorin interpolation theorem will improve the $L^p$ estimate on the gain operator $Q^+$. Consequently, in Section \ref{Section proof of the main theorem}, we prove the main theorem, Theorem \ref{main}, on the propagation of $L^p$ estimates using the improved $L^p$ upper-bound estimate for the gain operator $Q^+$ from Section \ref{Lp section} and the lower-bound estimate for the loss operator $Q^-$ from Lemma \ref{lowerL}.
\section{Preliminary lemmas}\label{prelim} We start with introducing many preliminary estimates on the relativistic terms, which will be crucially used in the paper. We start with a well-known coercive inequality for the \textit{relative momentum} $g$ in the \textit{center-of-momentum} framework.

\begin{lemma}[Lemma 3.1 (i) and (ii) on page 316 of \cite{GS3}]\label{coersive inequality}  The relative momentum $g$ satisfies the following inequalities:
\begin{equation}\label{gINEQ}
\frac{|v-v_*|}{\sqrt{v^0v_*^0 }}\leq g(v,v_*)\leq |v-v_*|.
\end{equation} Moreover, $g^2<g\sqrt{s}\lesssim v^0v_*^0.$
\end{lemma}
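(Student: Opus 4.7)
The plan is to reduce both estimates to the single algebraic identity
\[ g^2 = 2(v^0 v_*^0 - v\cdot v_* - 1), \]
which I obtain by expanding $g^2 = |v-v_*|^2 - (v^0 - v_*^0)^2$ and collecting terms via $(v^0)^2 = 1 + |v|^2$ (and the analogous identity for $v_*^0$). Once this identity is in hand, the upper bound $g \leq |v-v_*|$ is immediate from the defining formula, since $-(v^0-v_*^0)^2 \leq 0$.

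For the lower bound $|v-v_*| \leq g\sqrt{v^0 v_*^0}$, I would clear the square root and prove the equivalent polynomial inequality $g^2 v^0 v_*^0 - |v-v_*|^2 \geq 0$. Writing $a = v^0$, $b = v_*^0$, $c = v \cdot v_*$ and substituting the identity above, a direct expansion produces
\[ g^2 v^0 v_*^0 - |v-v_*|^2 = 2(ab-1)(ab-c) - (|v|^2 + |v_*|^2). \]
Since $ab \geq 1$, the right-hand side is monotone decreasing in $c$, and the Euclidean Cauchy--Schwarz yields $c \leq |v||v_*|$, so it suffices to treat the worst case $c = |v||v_*|$ (i.e.\ $v$ parallel to $v_*$). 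I would then parameterize $|v| = \sinh\alpha$, $|v_*| = \sinh\beta$, so that $a = \cosh\alpha$, $b = \cosh\beta$ and $ab - |v||v_*| = \cosh(\alpha - \beta)$; the hyperbolic addition formulas then collapse the expression to $(\cosh(\alpha-\beta) - 1)^2 \geq 0$. The main obstacle I anticipate is exactly this last step: noticing that the Euclidean Cauchy--Schwarz already identifies the worst case and that the remainder telescopes into a perfect square after the hyperbolic substitution.

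The ``moreover'' assertions are short. Since $s = g^2 + 4 > g^2$, one has $\sqrt{s} > g$ whenever $g > 0$, so $g\sqrt{s} > g^2$; the case $g = 0$ is trivial. For $g\sqrt{s} \lesssim v^0 v_*^0$, the cleanest route is the M\"oller velocity identity $v_{\o} = g\sqrt{s}/(v^0 v_*^0)$ recorded just above the lemma: the elementary bound $v_{\o} \leq |v/v^0 - v_*/v_*^0| \leq 2$, which uses $|v|/v^0 \leq 1$, immediately yields $g\sqrt{s} \leq 2\, v^0 v_*^0$.
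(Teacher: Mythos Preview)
Your argument is correct and self-contained. Note, however, that the paper does not actually prove this lemma: it is quoted verbatim from \cite{GS3} (with only a remark adjusting a factor-of-two convention), so there is no in-paper proof to compare against. Your derivation---the identity $g^2=2(v^0v_*^0 - v\cdot v_* -1)$, the Cauchy--Schwarz reduction to the parallel case, and the hyperbolic substitution that collapses the remainder to $(\cosh(\alpha-\beta)-1)^2$---is a clean direct verification, and your use of the M\"oller-velocity identity $v_{\o}=g\sqrt{s}/(v^0v_*^0)$ together with $v_{\o}\le 2$ for the bound $g\sqrt{s}\lesssim v^0v_*^0$ is exactly in the spirit of the paper's own discussion of $v_{\o}$. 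One minor quibble: the strict inequality $g^2<g\sqrt{s}$ fails at $g=0$ (i.e.\ $v=v_*$), so ``trivial'' there should be read as ``the inequality degenerates to equality,'' which is consistent with how the estimate is used downstream.
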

\begin{remark}
We remark that in \cite{GS3} $g$ is defined as $\frac{1}{2}$ of our $g$ in \eqref{g}. \end{remark}
 We will also make a use of the following uniform lower bound estimate for the loss term operator $L$:
\begin{lemma}[Lemma 3.3 on page 925 of \cite{MR3166961}]\label{lowerL}
	Let $f(t,v)$ have finite mass, energy, and entropy as in \eqref{conlaw}, \eqref{entropy.eq}, and \eqref{initial}. Then there exists uniform positive constants $C_{\ell}>0$ and $C_u>0$, which are determined only by the mass, energy, and entropy of the initial data $f_0$, such that the following estimate holds:
$$
C_{\ell} v^0 \leq 
\int_{\rth\times \mathbb{S}^2}v_{\o} \sigma(g,\theta)f(t,v_*)d\omega dv_*
= (Lf)(t,v)
\leq C_u v^0.
$$
\end{lemma}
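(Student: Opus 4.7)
The plan is to treat the upper and lower bounds of $Lf$ separately, with the upper bound being a direct estimate and the lower bound requiring a de la Vall\'ee-Poussin--type extraction of a non-degeneracy set for $f$ from the conserved mass, energy, and entropy.

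For the upper bound, I would substitute the hard-ball form $\sigma(g,\theta)=C_\Phi g\cdot C\sin\theta$ together with the identity $v_\o g = g^2\sqrt{s}/(v^0 v_*^0)$ into the definition of $Lf$, so that the angular integration factors out as the finite constant $\int_{\mathbb{S}^2}\sin\theta\,d\omega$. The problem then reduces to bounding $\int_{\rth} (g^2\sqrt{s}/(v^0 v_*^0))f(t,v_*)\,dv_*$. Applying Lemma~\ref{coersive inequality} to obtain $g^2\lesssim v^0 v_*^0$ and $\sqrt{s}\lesssim \sqrt{v^0 v_*^0}$, the integrand is controlled pointwise by $C\sqrt{v^0}\sqrt{v_*^0}\,f(t,v_*)$. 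Using $\int\sqrt{v_*^0}f\,dv_*\leq \int v_*^0 f\,dv_* = E_0$ (since $v_*^0\geq 1$), I would conclude $Lf(v)\leq C\sqrt{v^0}\leq C_u v^0$.

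For the lower bound, I would first localize the mass of $f$. The tail bound $\int_{|v_*|>R}f\,dv_*\leq E_0/\sqrt{1+R^2}$ allows me to pick $R=R(M_0,E_0)$ so large that $\int_{B_R}f\,dv_*\geq 3M_0/4$. Then I would split
\[
\int_{B_R}f\,dv_*=\int_{B_R\cap\{f\leq\eta\}}f + \int_{B_R\cap\{\eta<f\leq A\}}f + \int_{B_R\cap\{f>A\}}f,
\]
bounding the first piece by $\eta|B_R|$ and the last piece by $(H_0+C)/\log A$ via the entropy and the elementary inequality $f\leq (f\log f)/\log A$ on $\{f>A\}$. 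Choosing $\eta$ small and $A$ large in terms of $M_0,E_0,H_0$ would force the middle piece to be at least $M_0/4$, producing a set $A^*\subset B_R$ with $|A^*|\geq \delta>0$ on which $f\geq\eta$, with $\delta$ depending only on the conserved quantities.

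The hard part will be establishing the pointwise kernel lower bound $v_\o g\gtrsim v^0$ on $A^*$ uniformly in $v$. Using $g\geq |v-v_*|/\sqrt{v^0 v_*^0}$ and $\sqrt{s}\geq 2$ from Lemma~\ref{coersive inequality}, I have $v_\o g \geq 2|v-v_*|^2/(v^0 v_*^0)^2$. For $|v|\geq 2R$, $|v-v_*|\geq |v|/2\gtrsim v^0$ while $v_*^0$ is bounded on $A^*\subset B_R$, which is the cleanest case; for bounded $|v|$, after excising a small ball around $v$ from $A^*$ (losing only a controlled amount of measure) the quantity $|v-v_*|$ stays bounded below, and since $v^0$ is also bounded in this regime, $v_\o g\gtrsim 1 \gtrsim v^0$. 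The delicate point is that $v_\o g$ depends sensitively on the angular alignment between $v$ and $v_*$, so an angular decomposition of $A^*$ into directional sectors may be required to extract the full linear-in-$v^0$ scaling uniformly in the direction of $v$. Once this is secured, integrating $v_\o g$ against $f\geq\eta$ on (the effective subset of) $A^*$ delivers $Lf(v)\geq C_\ell v^0$.
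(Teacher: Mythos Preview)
The paper does not prove this lemma; it is quoted from \cite{MR3166961} and used as a black box, so there is no in-paper argument to compare your proposal against.

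That said, your upper bound is correct (and in fact yields the sharper $Lf\lesssim (v^0)^{1/2}\le C_u v^0$). Your lower-bound strategy---extracting via mass, energy and entropy a set $A^*\subset B_R$ of positive measure on which $f\ge\eta$---is the standard one and is sound. The gap is in the pointwise kernel estimate. From $g\ge|v-v_*|/\sqrt{v^0v_*^0}$ and $\sqrt{s}\ge2$ you get
\[
v_\o\, g=\frac{g^2\sqrt{s}}{v^0v_*^0}\ge \frac{2|v-v_*|^2}{(v^0v_*^0)^2},
\]
but for $|v|\ge 2R$ and $v_*\in B_R$ one has $|v-v_*|^2\approx (v^0)^2$ while $(v^0v_*^0)^2\approx (v^0)^2$, so this only gives $v_\o g\gtrsim 1$, not $v_\o g\gtrsim v^0$. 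Even the exact computation does not save this: for bounded $v_*$ and large $|v|$, the identity $g^2=2(v^0v_*^0-v\cdot v_*-1)$ shows $g^2\approx v^0$ and hence $\sqrt{s}\approx (v^0)^{1/2}$, so $v_\o g\approx (v^0)^{1/2}$. Thus restricting $v_*$ to a bounded ball can at best produce $Lf\gtrsim (v^0)^{1/2}$, never the linear growth in $v^0$. The ``angular decomposition of $A^*$'' you suggest cannot close this gap, since the deficit is in the radial scaling of $g$ (namely $g\sim (v^0)^{1/2}$ rather than $g\sim v^0$ when $v_*$ is bounded), not in any directional degeneracy. If the stated bound $C_\ell v^0$ is to hold for the hard-ball kernel, the argument must either exploit contributions from unbounded $v_*$ or rely on a different mechanism than the one you outline; you should consult the proof in \cite{MR3166961} for the precise estimate used there.
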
One of the most crucial identities that we use in this paper is the following identity on the representations of the relativistic Boltzmann collision operator. We call it a relativistic Carleman representation.
\begin{lemma}[The relativistic Carleman representation \cite{Jang2016}]\label{carlemanrep}For any suitable integrable function $A:\rth\times\rth\times \rth \mapsto \mathbb{R}$, it holds that
\begin{multline*}
\int_\rth \frac{dv}{v^0}\int_\rth \frac{dv_*}{v_*^0}\int_\rth \frac{dv'}{v'^0}\int_\rth \frac{dv'_*}{v'^0_*}\ s\sigma(g,\theta)\\\times \delta^{(4)}\left((v^0,v)+(v_*^0,v_*)-(v'^0,v')-(v'^0_*,v'_*)\right)A(v,v_*,v')
\\=\int_\rth \frac{dv}{v^0}\int_\rth \frac{dv'}{v'^0}\int_{\rth} \frac{d\pi_{v_*}}{v_*^0}\ \frac{s}{2\bar{g}}\sigma(g,\theta)A(v,v_*,v'),
\end{multline*} where $$d\pi_{v_*}\eqdef dv_*\ u(v^0+v_*^0-v'^0)\delta\left(\frac{\bar{g}}{2}+\frac{-v_*^0(v^0-v'^0)+v_*\cdot(v-v')}{\bar{g}}\right),$$ and $u(x)=1$ if $x\geq 1$ and $=0$ otherwise.
\end{lemma}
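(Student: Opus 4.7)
The plan is to carry out the reduction of the four-dimensional Dirac distribution stepwise. First I would split $\delta^{(4)}$ as the product of its spatial and temporal components and integrate the spatial factor $\delta^{(3)}(v+v_*-v'-v'_*)$ against $dv'_*$. This substitution fixes $v'_* = v+v_*-v'$, determines $v'^0_* = \sqrt{1+|v+v_*-v'|^2}$ via the mass shell, produces the factor $1/v'^0_*$ from the measure $dv'_*/v'^0_*$, and leaves a scalar delta $\delta(E(v_*))$ inside the remaining triple integral over $dv\,dv_*\,dv'$, with $E(v_*) := v^0+v_*^0-v'^0-v'^0_*(v_*)$ viewed as a distribution in $v_*$ for fixed $(v,v')$.

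The key algebraic step is then the factorisation identity
\[
(v^0+v_*^0-v'^0)^2 - (v'^0_*)^2 \;=\; -2\,\bar g\,\Psi(v_*),
\]
where $\Psi(v_*) := \frac{\bar g}{2}+\frac{-v_*^0(v^0-v'^0)+v_*\cdot(v-v')}{\bar g}$ is the argument of the delta in the definition of $d\pi_{v_*}$. This identity is verified by a short expansion using only the mass-shell relations $(v_*^0)^2=1+|v_*|^2$, $(v'^0_*)^2 = 1+|v+v_*-v'|^2$, and the definition $\bar g^2 = |v-v'|^2 - (v^0-v'^0)^2$. It exhibits the zero set $\{\Psi=0\}$ as the union of two branches $\{v^0+v_*^0-v'^0 = \pm v'^0_*\}$, of which only the $+$ branch is the physical energy-conservation surface $\{E=0\}$; the Heaviside factor $u(v^0+v_*^0-v'^0)$ in the definition of $d\pi_{v_*}$ is what selects this branch and discards the spurious one.

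Setting $E_+ := v^0+v_*^0-v'^0+v'^0_*$ so that $E\cdot E_+ = -2\bar g\,\Psi$, I would apply the standard distributional identity $\delta(fg) = \delta(f)/|g| + \delta(g)/|f|$ to $\delta(E\cdot E_+)=\delta(\Psi)/(2\bar g)$. Using $|E_+| = 2v'^0_*$ on $\{E=0\}$ and $|E|=2v'^0_*$ on $\{E_+=0\}$, and multiplying by $u$ to kill the contribution from the spurious branch, one obtains the clean identification $\delta(E(v_*))\,dv_*/v'^0_* = d\pi_{v_*}/(2\bar g)$. Substituting this into the triple integral left after step one and relabelling the integration variables produces the RHS of the stated identity.

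The main obstacle is spotting the factorisation $E\cdot E_+ = -2\bar g\,\Psi$: although, once guessed, it is verified by a direct algebraic expansion, this identity is what converts the implicit, quadratic energy-momentum constraint (coming from enforcing energy conservation on top of the already-substituted spatial conservation) into a parametrisation by the linear-in-$v_*$ surface $\{\Psi=0\}$, and thereby produces the momentum factor $1/\bar g$ that is the signature feature of the relativistic Carleman representation. Without this identity the Jacobian calculation relating $\delta(E)$ and $\delta(\Psi)$ is opaque; with it, the remaining delta-function arithmetic and bookkeeping of the Lorentz-invariant factors $s$ and $\sigma$ is mechanical.
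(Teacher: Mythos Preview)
Your approach is correct and is the natural route to the relativistic Carleman representation: integrate out $v'_*$ via the spatial part of $\delta^{(4)}$, then convert the remaining scalar energy delta $\delta(E)$ into $\delta(\Psi)$ by means of the factorisation $E\cdot E_+=-2\bar g\,\Psi$, using the Heaviside factor $u$ to discard the spurious branch $\{E_+=0\}$. This is precisely the derivation that the paper has in mind; note, however, that the paper does not actually prove the lemma here but simply cites Section~2.2 of \cite{Jang2016}, so your write-up is already more detailed than what appears in the text.

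One small point to recheck: from $\delta(\Psi)/(2\bar g)=\delta(EE_+)=\delta(E)/(2v'^0_*)+\delta(E_+)/(2v'^0_*)$ and $u\,\delta(E_+)=0$, $u\,\delta(E)=\delta(E)$, one gets $\delta(E)/v'^0_*=u\,\delta(\Psi)/\bar g=d\pi_{v_*}/(\bar g\,dv_*)$, which yields $s/\bar g$ rather than $s/(2\bar g)$ on the right-hand side. Your sketch writes $d\pi_{v_*}/(2\bar g)$ in this step, which matches the paper's stated constant but not the arithmetic you set up; make sure you reconcile this factor of $2$ with the conventions in \cite{Jang2016} before finalising.
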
\begin{proof}
The proof is given in Section 2.2 of \cite[page 17, see (2.2.3)]{Jang2016}. 
\end{proof}
For the interpolation between Lebesgue spaces, we will record here a lemma on the gain of regularity for the relativistic Boltzmann gain operator.
\begin{lemma}[Gain of regularity \cite{MR3880739}, Theorem 1.1]
$$||\nabla_v Q^+(f,h)||_{L^2}\lesssim ||f||_{L^1}||h||_{L^2}.$$\end{lemma}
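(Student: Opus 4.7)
The plan is to establish the gain of regularity by exploiting the averaging effect of the gain operator, in the spirit of the Lions--Wennberg--Bouchut--Desvillettes theory developed for the classical Boltzmann case, suitably adapted to the relativistic kinematics. The guiding principle is that $Q^+(f,h)$ can be written as an integral over a two-dimensional surface for each fixed $v$, which morally provides a half-derivative gain; combined with the hard-ball weight $\Phi(g)=C_\Phi g$ and a careful use of the Carleman representation, this can be upgraded to a full derivative in $L^2$.

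First, I would apply the relativistic Carleman representation of Lemma \ref{carlemanrep} to rewrite
\[
Q^+(f,h)(v)=\int_{\rth}\frac{dv'}{v'^0}\,\frac{1}{v^0}\int_{\rth}\frac{d\pi_{v_*}}{v_*^0}\,\frac{s\,\sigma(g,\theta)}{2\bar g}\,f(v')h(v_*'),
\]
so that the post-collisional variable $v'$ plays the role of a shifted integration variable while $v_*$ is constrained to the hypersurface $E^{v_*}_{v'-v}$. After the substitution $w=v'-v$, the operator becomes a $w$-average of shifted values of $f$ against a kernel $K(v,w)$ that encodes the hypersurface integral of $h(v_*')$ weighted by the cross-section.

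Second, I would perform a dyadic decomposition in $\bar g=g(v,v+w)$ using a smooth partition of unity $\chi_k$ localizing at $\bar g\sim 2^k$. On each piece the kernel $K_k(v,w)$ is smooth; moreover the Pythagorean identity $g(v,v_*)^2=g(v,v')^2+g(v',v_*)^2$ together with the hard-ball angular factor $\sigma_0(\theta)=C\sin\theta$ supplies a factor of $\bar g/g$ that cancels the $1/\bar g$ singularity coming from the Carleman representation, exactly as in the reduction outlined in the introduction. Third, I would take Fourier transforms in $v$ and use Plancherel: the convolution-like structure in $w$ lets one trade a $v$-derivative for a $w$-derivative, which in turn is controlled by differentiating the kernel $K_k$ along the tangent directions of the hypersurface, with $\|h\|_{L^2}$ bounding the $L^2$-trace on $E^{v_*}_{v'-v}$ and $\|f\|_{L^1}$ absorbing the integral over the transverse direction via Cauchy--Schwarz.

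Finally, I would sum the dyadic pieces: the lower dyadic scales are absorbed by the hard-ball factor $\Phi(g)=C_\Phi g$ (which vanishes linearly in $g$ and hence compensates the worst near-grazing contributions), while the upper scales decay because of the moment control provided by $\|h\|_{L^2}$ and the integrability on the hypersurface. The main obstacle in this program is twofold: controlling the singularity $1/\bar g$ uniformly across scales without losing the hard-potential weight, and handling the non-compactness of $E^{v_*}_{v'-v}$, which differs sharply from the classical situation where the analogous hyperplane yields uniformly compactly supported kernels after symmetrization. Both difficulties are precisely those overcome by the Pythagorean reduction and the hypersurface estimates developed in \cite{Jang2016}, so these tools should allow one to close the required bound $\|\nabla_v Q^+(f,h)\|_{L^2}\lesssim \|f\|_{L^1}\|h\|_{L^2}$.
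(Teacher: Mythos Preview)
The paper does not prove this lemma at all: it is quoted verbatim as Theorem~1.1 of \cite{MR3880739} and used as a black box to obtain Corollary~\ref{Gain of integrability}. So there is no ``paper's own proof'' to compare your proposal against; any assessment has to be of your sketch on its own merits and, if anything, against the original source.

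As a sketch, your outline is in the right spirit (averaging on a two-dimensional manifold, Fourier/Plancherel, dyadic localization), but several steps are too loose to be called a proof. First, the pointwise formula you write for $Q^+(f,h)(v)$ is not what Lemma~\ref{carlemanrep} gives: that lemma is an identity for the \emph{full} $dv\,dv_*\,d\omega$ integral against a test function $A(v,v_*,v')$, not a pointwise representation of $Q^+$ in $v$; moreover $h$ is evaluated at $v_*'=v+v_*-v'$, which must be tracked through the delta constraint. Second, the assertion that the ``convolution-like structure in $w$ lets one trade a $v$-derivative for a $w$-derivative'' hides the main difficulty: the kernel depends on $v$ through $v^0$, through the hypersurface $E^{v_*}_{v'-v}$, and through $s$ and $g$, so $\partial_v$ does not simply become $-\partial_w$; isolating the genuinely oscillatory piece from these lower-order $v$-dependencies is exactly where the work lies. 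Third, your dyadic summation argument is asserted rather than shown: you need a quantitative gain of $2^{-k}$ (or better) at high frequency from the surface averaging, and a matching smallness at low frequency from the hard-ball factor, and neither is verified here. If you want a self-contained argument, you should consult \cite{MR3880739} directly; within the present paper the correct move is simply to cite the result, as the authors do.
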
 This immediately implies the following corollary:\begin{corollary}\label{Gain of integrability}
$$||Q^+(f,h)||_{L^6}\lesssim ||f||_{L^1}||h||_{L^2}.$$\end{corollary}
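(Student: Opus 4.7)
The plan is to derive the corollary as a direct application of the Sobolev embedding $\dot{H}^1(\mathbb{R}^3)\hookrightarrow L^6(\mathbb{R}^3)$ to the already-proved gradient bound $\|\nabla_v Q^+(f,h)\|_{L^2}\lesssim \|f\|_{L^1}\|h\|_{L^2}$. Recall that in three dimensions the sharp Sobolev inequality gives $\|u\|_{L^6(\mathbb{R}^3)}\lesssim \|\nabla u\|_{L^2(\mathbb{R}^3)}$ for any $u$ in the homogeneous Sobolev space $\dot{H}^1$. Since $v\mapsto Q^+(f,h)(t,v)$ is a function on $\mathbb{R}^3_v$, this exponent pair $(2,6)$ is exactly what is needed.

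First I would set $u(v)\eqdef Q^+(f,h)(t,v)$ and verify that $u$ lies in the natural functional setting for the Sobolev inequality to apply. For smooth, rapidly decaying $f$ and $h$, the M\"oller velocity bound $v_{\o}\lesssim 1$ together with the hard-ball cross-section from \eqref{hard}--\eqref{angassumption} gives enough decay in $v$ to ensure $Q^+(f,h)$ vanishes at infinity, so the homogeneous Sobolev embedding is applicable and yields
\[
\|Q^+(f,h)\|_{L^6}\lesssim \|\nabla_v Q^+(f,h)\|_{L^2}.
\]
Then I would combine this with the gain-of-regularity estimate from the previous lemma to obtain
\[
\|Q^+(f,h)\|_{L^6}\lesssim \|\nabla_v Q^+(f,h)\|_{L^2}\lesssim \|f\|_{L^1}\|h\|_{L^2}.
\]

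For general $f\in L^1$ and $h\in L^2$, I would close the argument by a standard density/approximation step: approximate $f$ and $h$ by Schwartz functions $f_n\to f$ in $L^1$ and $h_n\to h$ in $L^2$; by the bilinearity of $Q^+$ and the bound above, the sequence $Q^+(f_n,h_n)$ is Cauchy in $L^6$, its limit agrees (in distribution) with $Q^+(f,h)$, and passing to the limit preserves the estimate. The main, and only, obstacle is really this density step — the rest is a clean one-line consequence of the Sobolev embedding — so the proof is essentially immediate from the preceding lemma.
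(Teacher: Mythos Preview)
Your proof is correct and follows the same route as the paper, which simply invokes the Sobolev embedding $H^1\subset L^6$ applied to the gradient estimate of the preceding lemma. Your version is slightly more careful in using the homogeneous embedding $\dot{H}^1(\mathbb{R}^3)\hookrightarrow L^6(\mathbb{R}^3)$ (since the lemma only controls $\nabla_v Q^+$ in $L^2$, not $Q^+$ itself) and in spelling out the density step, but the argument is otherwise identical.
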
 \begin{proof}
This is almost obvious via the Sobolev embedding theorem $H^1\subset L^6.$
\end{proof}Finally, we derive the Pythagorean theorem for the relative momenta as in the following proposition:
\begin{proposition}[Pythagorean theorem for the \textit{relative momenta}]\label{lemma:pytha}The relative momenta $g=g(v,v_*),$ $\bar{g}=g(v,v')$, and $\tilde{g}=g(v',v_*)$ satisfy the following identity$$g^2=\tilde{g}^2+\bar{g}^2.$$ Moreover, $$\sin\frac{\theta}{2}=\frac{\bar{g}}{g},$$ if $\theta$ is defined as in \eqref{costheta}.
\end{proposition}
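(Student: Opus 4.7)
The plan is to reduce both identities to the single algebraic observation
\[
g(a,b)^2 = 2\bigl(a^0 b^0 - a\cdot b - 1\bigr),
\]
which follows from the definition \eqref{g} of $g$ by expanding $-(a^0-b^0)^2 + |a-b|^2$ and applying the mass-shell relations $(a^0)^2 = 1+|a|^2$ and $(b^0)^2 = 1+|b|^2$. Once this bilinear form is in hand, both claims reduce to bookkeeping with the conservation laws \eqref{emcons}.

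For the Pythagorean identity, I would first apply the formula above to write
\[
\bar{g}^2 + \tilde{g}^2 = 2\bigl[v'^0(v^0+v_*^0) - v'\cdot(v+v_*) - 2\bigr],
\]
and then substitute $v^0+v_*^0 = v'^0+v_*'^0$ and $v+v_* = v'+v_*'$ via \eqref{emcons}. Using the mass-shell identity $(v'^0)^2-|v'|^2 = 1$, the right-hand side collapses to $2(v'^0 v_*'^0 - v'\cdot v_*' - 1) = g(v',v_*')^2$. It then suffices to observe $g(v',v_*') = g(v,v_*)$; this is a consequence of the invariance of $s = g^2+4$, or equivalently of the identity $(v^0+v_*^0)^2 - |v+v_*|^2 = (v'^0+v_*'^0)^2 - |v'+v_*'|^2$, which after using the mass-shell relations reduces to $v^0v_*^0 - v\cdot v_* = v'^0 v_*'^0 - v'\cdot v_*'$.

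For the angular identity $\sin(\theta/2) = \bar{g}/g$, the crucial preliminary step is to note that the conservation laws \eqref{emcons} yield $v - v_*' = v' - v_*$ and $v_* - v_*' = v' - v$, together with the same relations for the energy components, so that
\[
g(v,v_*') = \tilde{g}, \qquad g(v_*,v_*') = \bar{g}.
\]
I would then expand the numerator of $g^2\cos\theta$ in \eqref{costheta} into four terms of the form $a^0 b^0 - a\cdot b$ and rewrite each via the key formula as $g(a,b)^2/2 + 1$. The constants $\pm 1$ cancel in pairs, and the two identities above collapse the four terms to
\[
g^2\cos\theta = \tfrac{1}{2}\bigl[-\bar{g}^2 + \tilde{g}^2 + \tilde{g}^2 - \bar{g}^2\bigr] = \tilde{g}^2 - \bar{g}^2.
\]
Invoking the Pythagorean identity $\tilde{g}^2 = g^2-\bar{g}^2$ just proved gives $1-\cos\theta = 2\bar{g}^2/g^2$, and the half-angle formula, combined with $\bar{g},g\ge 0$ and the cutoff $0\le\theta\le\pi/2$ from \eqref{angassumption}, yields $\sin(\theta/2) = \bar{g}/g$ with the correct sign.

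The main obstacle is purely algebraic bookkeeping: tracking the many cross-terms in the expansion of $\cos\theta$ and matching them correctly against the Pythagorean reorganization. The only conceptual point is recognizing that the conservation laws collapse the six a priori distinct relative momenta $g(v,v_*)$, $g(v,v')$, $g(v,v_*')$, $g(v_*,v')$, $g(v_*,v_*')$, $g(v',v_*')$ down to just the three values $g,\bar{g},\tilde{g}$; after that reduction, the angular identity is an immediate corollary of the Pythagorean relation and the half-angle formula.
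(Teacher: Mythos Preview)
Your proof is correct. The underlying content is the same as in the paper---both arguments are direct algebraic computations using the conservation laws \eqref{emcons} and the mass-shell relations $(a^0)^2=1+|a|^2$---but the organization differs in two notable ways.

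First, for the Pythagorean identity the paper proceeds geometrically: it establishes the Minkowski orthogonality relation
\[
-(v'^0-v_*^0)(v'^0-v^0)+(v'-v_*)\cdot(v'-v)=0,
\]
and then expands $g^2$ by splitting $v-v_*=(v-v')+(v'-v_*)$ so that the cross term vanishes. Your route instead packages everything through the single bilinear identity $g(a,b)^2=2(a^0b^0-a\cdot b-1)$ and reduces the claim to $g(v',v_*')=g(v,v_*)$; this is more systematic and avoids the intermediate orthogonality claim, at the cost of obscuring the geometric interpretation.

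Second, and more substantively, for the angular identity the paper invokes the formula $\cos\theta=(-\bar{g}^2+\tilde{g}^2)/g^2$ by citation to an external reference, whereas you derive it from scratch by expanding the numerator in \eqref{costheta} and using the observation that conservation collapses the six relative momenta to the three values $g,\bar{g},\tilde{g}$. Your argument is therefore self-contained on this point. One small remark: the restriction $\theta\in(0,\pi/2]$ from \eqref{angassumption} is not needed to fix the sign in $\sin(\theta/2)=\bar{g}/g$; since $\theta\in[0,\pi]$ is the natural range of the scattering angle, $\sin(\theta/2)\ge 0$ holds automatically.
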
\begin{proof}
We first claim that the momentum and energy conservation laws \eqref{emcons} imply that $$\bar{g}^2-2v_*^0(v^0-v'^0)+2v_*\cdot(v-v')=0.$$ 
In order to prove the claim, we first observe that 
$$-(v_*^0)^2+|v_*|^2=-1=-(v'^0_*)^2+|v'_*|^2.$$ Therefore, this implies
$$-(v_*^0+v'^0_*)(v_*^0-v'^0_*)+(v_*+v'_*)\cdot(v_*-v'_*)=0,$$ and this is equivalent to 
$$-(v_*^0+v'^0_*)(v^0-v'^0)+(v_*+v'_*)\cdot(v-v')=0,$$ by the conservation laws  \eqref{emcons}. Then, we use $$(v'^0_*,v'_*)= (v^0+v^0_*-v'^0,v+v_*-v')$$ of \eqref{emcons} and obtain that$$-(v^0+2v_*^0-v'^0)(v^0-v'^0)+(v+2v_*-v')\cdot(v-v')=0.$$ Then we recall the definition of $\bar{g}$ that 
$$\bar{g}=g(v,v')\eqdef \sqrt{-(v^0-v'^0)^2+|v-v'|^2},$$ and use this definition to obtain that 
$$\bar{g}^2-2v_*^0(v^0-v'^0)+2v_*\cdot(v-v')=0.$$
This proves the claim. Now we observe that
\begin{multline*}
\bar{g}^2=-(v^0-v'^0)^2+|v-v'|^2\\
= (-(v^0)^2+|v|^2)+(-(v'^0)^2+|v'|^2)-2(-v^0v'^0+v\cdot v'),
\end{multline*} by expanding the squares. Then, we have \begin{multline*}
0=	\bar{g}^2-2v_*^0(v^0-v'^0)+2v_*\cdot(v-v')\\
	=(-(v^0)^2+|v|^2)+(-(v'^0)^2+|v'|^2)-2(-v^0v'^0+v\cdot v')\\+2(-v_*^0v^0+v_*\cdot v)-2(-v_*^0v'^0+v_*\cdot v').
	\end{multline*}
	Now we use $$(-(v^0)^2+|v|^2)=-1=(-(v'^0)^2+|v'|^2)$$ to further continue as
	\begin{multline*}
	0=2(-(v'^0)^2+|v'|^2)-2(-v^0v'^0+v\cdot v')+2(-v_*^0v^0+v_*\cdot v)-2(-v_*^0v'^0+v_*\cdot v')\\
	=2\left(-(v'^0-v_*^0)(v'^0-v^0)+(v'-v_*)\cdot (v'-v)\right).
\end{multline*} Therefore, we have
\begin{equation*}\begin{split}
	g^2&=-(v^0-v_*^0)(v^0-v_*^0)+(v-v_*)\cdot (v-v_*)\\
	&=-(v^0-v'^0+v'^0-v_*^0)(v^0-v'^0+v'^0-v_*^0)\\
	&\quad+(v-v'+v'-v_*)\cdot (v-v'+v'-v_*)\\
	&=-(v^0-v'^0)(v^0-v'^0)+(v-v')\cdot (v-v')\\&\quad+2\left(-(v'^0-v_*^0)(v^0-v'^0)+(v'-v_*)\cdot (v-v')\right)\\&\quad-(v'^0-v_*^0)(v'^0-v_*^0)+(v'-v_*)\cdot (v'-v_*)\\
	&=-(v^0-v'^0)(v^0-v'^0)+(v-v')\cdot (v-v')\\&\quad-(v'^0-v_*^0)(v'^0-v_*^0)+(v'-v_*)\cdot (v'-v_*)
	=\bar{g}^2+\tilde{g}^2.\end{split}
\end{equation*}This completes the proof of the Pythagorean theorem. Then, the relation $\sin\frac{\theta}{2}=\frac{\bar{g}}{g}$ immediately follows by the Pythagorean theorem above, the identity $$\cos\theta = \frac{-\bar{g}^2+\tilde{g}^2}{g^2}$$ from Page 12 of \cite{Jang2016}, and the half-angle formula. This completes the proof.
\end{proof}Together with the assumption on the collision cross-section \eqref{hard} and \eqref{angassumption}, the lemma above implies the following corollary:
\begin{corollary}\label{barg}
	Suppose that the collision cross-section $\sigma(g,\theta)$ satisfies \eqref{hard} and \eqref{angassumption}.  Then, we have
	$$\sigma(g,\theta)\approx g\sin\left(\frac{\theta}{2}\right)\approx \bar{g}.$$
\end{corollary}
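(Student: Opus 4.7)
The plan is to combine the explicit form of the collision kernel dictated by the hypotheses with the half-angle identity, and then invoke Proposition \ref{lemma:pytha} to convert the angular factor into the relative momentum $\bar{g}$. Since the kernel is fully specified up to constants on the restricted angular support $\theta\in(0,\pi/2]$, there is really only one computation to carry out.

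First I would write out $\sigma(g,\theta)$ explicitly using \eqref{hard} and \eqref{angassumption}, namely $\sigma(g,\theta) = \Phi(g)\sigma_0(\theta) = C_\Phi C\, g \sin\theta$ for $\theta\in(0,\pi/2]$, where we recall that the symmetrization reduction described after \eqref{angassumption} allows us to assume the kernel is supported on this range. The constants $C_\Phi$ and $C$ are harmless for an $\approx$ statement, so the task reduces to showing $g\sin\theta \approx g\sin(\theta/2) \approx \bar{g}$.

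For the first equivalence I would apply the half-angle identity $\sin\theta = 2\sin(\theta/2)\cos(\theta/2)$. Because $\theta\in(0,\pi/2]$ forces $\theta/2\in(0,\pi/4]$, the factor $\cos(\theta/2)$ lies in $[\tfrac{\sqrt{2}}{2},1]$ and is therefore bounded above and below by absolute constants; this yields $\sin\theta \approx \sin(\theta/2)$ on the relevant support, and multiplying through by $g$ gives the first half of the claim. For the second equivalence I would invoke the identity $\sin(\theta/2) = \bar{g}/g$ proved in Proposition \ref{lemma:pytha}, which immediately gives $g\sin(\theta/2) = \bar{g}$ (in fact with equality, not just $\approx$), completing the chain.

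There is no substantial obstacle here: the corollary is essentially a bookkeeping consequence of the Pythagorean identity established in Proposition \ref{lemma:pytha} together with the restriction $\theta\leq\pi/2$ afforded by the symmetrization. The only point that warrants a line of justification is the fact that $\cos(\theta/2)$ does not vanish on the support of the kernel, which is precisely the reason why the symmetrization step (rather than integrating over all of $[0,\pi]$) was recorded earlier in the paper.
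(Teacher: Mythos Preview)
Your proof is correct and follows the same approach as the paper: both use the restriction $\theta\in(0,\pi/2]$ to conclude $\sin\theta\approx\sin(\theta/2)$, and then invoke the identity $\sin(\theta/2)=\bar g/g$ from Proposition~\ref{lemma:pytha}. Your version simply spells out the half-angle identity and the bound on $\cos(\theta/2)$ that the paper leaves implicit.
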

\begin{proof}
	Since we assume that $\theta\in \left(0,\frac{\pi}{2}\right],$ we have $\sin\theta\approx \sin \frac{\theta}{2}.$ By Proposition \ref{lemma:pytha}, we observe $\sin \frac{\theta}{2}\approx \frac{\bar{g}}{g}$ and obtain $\sigma(g,\theta)\approx \bar{g}$. 
\end{proof}
\section{$L^p$ estimates on the gain operator}\label{Lp section}
This section is devoted to show an $L^p$ estimate for the gain term in the collision operator. More precisely, we prove the following theorem:
\begin{theorem}\label{Lp estimate Q+}
Let $p\in [1,\infty)$ and $\eta>2.$ Let the collision cross-section $\sigma(g,\theta)$ satisfies the assumption \eqref{hard} and \eqref{angassumption}. Then, we have
\begin{equation}
||Q^+(f,h)||_{L^p(\mathbb{R}^3)}\lesssim ||f||_{L^1_{1/2}(\rth)}||h||_{L^p_{\eta(p-1)+\frac{1}{2}}(\rth)}.
\end{equation}
\end{theorem}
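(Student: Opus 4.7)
The plan is to prove Theorem~\ref{Lp estimate Q+} by duality combined with the relativistic Carleman representation of Lemma~\ref{carlemanrep}. First I would write
\begin{equation*}
||Q^+(f,h)||_{L^p(\rth)} = \sup_{||\phi||_{L^{p'}}=1} \int_{\rth} Q^+(f,h)(v)\,\phi(v)\,dv,
\end{equation*}
then reexpress $\int Q^+(f,h)\phi\,dv$ in the four-dimensional covariant form
\begin{equation*}
\int \frac{dv}{v^0}\frac{dv_*}{v_*^0}\frac{dv'}{v'^0}\frac{dv'_*}{v'^0_*}\, s\sigma(g,\theta)\,\delta^{(4)}\big((v^0,v)+(v_*^0,v_*)-(v'^0,v')-(v'^0_*,v'_*)\big)\phi(v)f(v')h(v'_*),
\end{equation*}
and invoke Lemma~\ref{carlemanrep} to replace the integration domain by $\rth_v\times\rth_{v'}\times E^{v_*}_{v'-v}$ with measure $\frac{s}{2\bar g}\sigma(g,\theta)\,\frac{dv}{v^0}\frac{dv'}{v'^0}\frac{d\pi_{v_*}}{v_*^0}$, after which $v'_*$ is determined by $v'_*=v+v_*-v'$ from the momentum--energy delta.

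The crucial feature of this form is the $1/\bar g$ factor coming from the Carleman measure, which at first sight looks catastrophic near the diagonal $\{v'=v\}$. However it is exactly cancelled by the kernel: $\sigma(g,\theta)\approx \bar g$ by Corollary~\ref{barg}, itself a consequence of the Pythagorean identity in Proposition~\ref{lemma:pytha}. Once this cancellation is performed, I would distribute the remaining factor $s\lesssim v^0 v_*^0$ from Lemma~\ref{coersive inequality}, allocating $(v^0)^{1/2}$ to $f$ (producing the $L^1_{1/2}$ norm) and the complementary $(v_*^0)^{1/2}$ together with $v'^0$-powers to $h$. I would then apply H\"older in $v$ against $\phi$ with the dual pair $(p,p')$, followed by Minkowski to exchange the outer $L^p_v$ norm with the inner $dv'\,d\pi_{v_*}$ integration, and a final H\"older in $v'$ to isolate $||f||_{L^1_{1/2}}$.

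What remains is the weighted surface estimate $\int_{E^{v_*}_{v'-v}}(v_*^0)^{-\eta}\,d\pi_{v_*}$ uniformly in $(v,v')$, which I would control via the explicit parametrization of the two-dimensional non-compact hypersurface $E^{v_*}_{v'-v}$ developed in \cite{Jang2016}; this is the step where the hypothesis $\eta>2$ is consumed, ensuring convergence of the relevant surface moment. Combined with a direct $L^1_{1/2}\times L^1_{1/2}\to L^1$ endpoint obtained from Lemmas~\ref{coersive inequality}--\ref{lowerL} and the $\omega$-integration, and with the higher-integrability endpoint $L^1\times L^2\to L^6$ from Corollary~\ref{Gain of integrability}, a bilinear Riesz--Thorin interpolation (keeping $f$ fixed in $L^1_{1/2}$ and interpolating in the $h$-slot) will deliver the announced exponent $\eta(p-1)+\tfrac12$ for all $p\in[1,\infty)$.

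The principal obstacle is the surface integral over the non-compact hypersurface $E^{v_*}_{v'-v}$. Without the Pythagorean cancellation $\sigma/\bar g\approx 1$ the integrand is not integrable near the degeneration set $\{v'=v\}$, so the new identity of Proposition~\ref{lemma:pytha} is essential and not cosmetic. Even after the cancellation, because $v_*^0$ varies nontrivially along the surface one must track weights carefully enough to retain room for the Riesz--Thorin argument to close with the exact exponent $\eta(p-1)+\tfrac12$; this bookkeeping, rather than any single estimate, is the technical heart of the proof.
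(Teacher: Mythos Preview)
Your ingredients---duality, the Carleman representation of Lemma~\ref{carlemanrep}, the Pythagorean cancellation $\sigma(g,\theta)/\bar g\approx 1$ from Corollary~\ref{barg}, and a decay-in-$v_*^0$ surface estimate on $E^{v_*}_{v'-v}$---are exactly the ones the paper uses. But the \emph{order} in which you combine them is off, and this causes two genuine problems.

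First, the paper does not go directly to the Carleman form and then H\"older/Minkowski. It performs the pre--post change $(v,v_*)\mapsto(v',v'_*)$ \emph{first}, turning the dual pairing into $\int f(v)h(v_*)\psi(v')\,v_\o\sigma\,dv\,dv_*\,d\omega$; then a single H\"older in the full $(v,v_*,\omega)$-integral separates the integrand into a piece carrying $|h(v_*)|^p(v_*^0)^{\eta p/p'}$ and a piece $K$ carrying $|\psi(v')|^{p'}(v_*^0)^{-\eta}$. Only $K$ is passed through Carleman; the $h$-piece is elementary because $h(v_*)$ is independent of the angular variable. This split is precisely where the exponent $\eta(p-1)+\tfrac12$ appears (since $p/p'=p-1$, together with one $\sqrt{v_*^0}$ from $\sigma\lesssim\sqrt{v^0v_*^0}$). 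Your plan instead keeps $h(v'_*)$ with $v'_*=v+v_*-v'$ depending on all three integration variables and then proposes to ``Minkowski-exchange'' the $L^p_v$ norm with $dv'\,d\pi_{v_*}$. That exchange is not innocent: the measure $d\pi_{v_*}$ is itself $v$-dependent (the hypersurface $E^{v_*}_{v'-v}$ moves with $v$), so Minkowski does not apply in the form you state, and you never isolate a clean $L^p$ norm of $h$.

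Second, your final paragraph conflates Theorem~\ref{Lp estimate Q+} with the subsequent corollary. No Riesz--Thorin is used---or usable---in the proof of Theorem~\ref{Lp estimate Q+}: the weight $\eta(p-1)+\tfrac12$ arises directly from the H\"older split above, with $\eta>2$ consumed only in the surface integral inside $K$. Interpolating $L^1_{1/2}\times L^1_{1/2}\to L^1$ with $L^1\times L^2\to L^6$ gives $L^1\times L^n\to L^p$ with $n<p$ and no $\eta$-dependent weight; that is the content of Corollary~\ref{Q+bound}, not of Theorem~\ref{Lp estimate Q+}. So the interpolation cannot deliver the exponent you announce.
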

\begin{proof}
By duality, we have
\begin{equation}\label{duality}||Q^+(f,h)||_{L^p(\mathbb{R}^3)}=\sup\left(\int_\rth Q^+(f,h)\psi(v) dv: ||\psi||_{L^{p'}(\rth)}\leq 1\right),\end{equation} where $\frac{1}{p}+\frac{1}{p'}=1.$ By definition, we have
$$\int_\rth Q^+(f,h)\psi(v) dv=\int_\rth dv\int_\rth dv_*\int_{\mathbb{S}^2}d\omega\ v_{\o}\sigma(g,\theta)f(v')h(v'_*)\psi(v).$$ Then we consider the pre-post change of variables $(v,v_*)\mapsto (v',v'_*)$ and obtain that
$$\int_\rth Q^+(f,h)\psi(v) dv=\int_\rth dv\int_\rth dv_*\int_{\mathbb{S}^2}d\omega\ v_{\o}\sigma(g,\theta)f(v)h(v_*)\psi(v'),$$ where we used $g(v,v_*)=g(v',v'_*)$ and $v_{\o}$ is invariant under the change of variables. Then by the H\"older inequality, we obtain
\begin{multline*}\int_\rth Q^+(f,h)\psi(v) dv\lesssim \int_\rth dv\int_\rth dv_*\int_{\mathbb{S}^2}d\omega\ v_{\o}\sigma(g,\theta)f(v)h(v_*)\psi(v')\\
\lesssim \left(\int_\rth dv\int_\rth dv_*\int_{\mathbb{S}^2}d\omega\ v_{\o}\sigma(g,\theta)|f(v)||h(v_*)|^p(v_*^0)^{\frac{\eta p}{p'}}\right)^{\frac{1}{p}}\\
\times \left(\int_\rth dv\int_\rth dv_*\int_{\mathbb{S}^2}d\omega\ v_{\o}\sigma(g,\theta)|f(v)||\psi(v')|^{p'}(v_*^0)^{-\eta}\right)^{\frac{1}{p'}},
\end{multline*} for some $\eta>2$ and $\frac{1}{p}+\frac{1}{p'}=1$. 
We first observe that $$\sigma(g,\theta)\lesssim g\lesssim \sqrt{v^0v'^0}.$$
Thus we further observe that
\begin{multline}\label{middlestep}\int_\rth Q^+(f,h)\psi(v) dv\\
\lesssim||f||_{L^1_{1/2}}^{\frac{1}{p}}||h||_{L^p_{\eta\frac{p}{p'}+\frac{1}{2}}} \left(\int_\rth dv\int_\rth dv_*\int_{\mathbb{S}^2}d\omega\ v_{\o}\sigma(g,\theta)|f(v)||\psi(v')|^{p'}(v_*^0)^{-\eta}\right)^{\frac{1}{p'}},
\end{multline} as we have $v_{\o}\lesssim 1$. We now define 
\begin{equation}\label{8foldint}K\eqdef \left(\int_\rth dv\int_\rth dv_*\int_{\mathbb{S}^2}d\omega\ v_{\o}\sigma(g,\theta)|f(v)||\psi(v')|^{p'}(v_*^0)^{-\eta}\right)^{\frac{1}{p'}}.\end{equation}

The estimate on $K$ will be performed via considering a new representation of $K$. This new representation is called as the relativistic Carleman representation, which was originally derived and used in \cite{Jang2016} and \cite{Jang-Strain-Yun}. Here we would like to make a brief introduction on the derivation of the relativistic Carleman representation of $K$. We first recall \cite[Theorem 2]{MR2765751} which states that
\begin{multline*}
 \int_{\mathbb{S}^2}d\omega\ v_{\o}\sigma(g,\theta) G(v,v_*,v',v'_*)=\frac{c}{v^0v_*^0}\int_\rth \frac{dv'}{v'^0}\int_\rth \frac{dv'_*}{v'^0_*}\ \frac{s}{2}\sigma(g,\theta)G(v,v_*,v',v'_*)\\\times \delta^{(4)}\left((v^0,v)+(v_*^0,v_*)-(v'^0,v')-(v'^0_*,v'_*)\right),
\end{multline*} for any suitable integrable function $G:\rth\times\rth\times\rth\times\rth \mapsto \mathbb{R}$,
where $c$ is the speed of light which has been normalized to be $1$ in the paper. Then we use this theorem and raise the 8-fold integral in \eqref{8foldint} to a 12-fold integral (see \cite{MR2765751, Jang2016, Jang-Strain-Yun, MR2728733}) by adding the 4-dimensional delta function of momentum and energy conservation laws: 
\begin{multline}\label{Kstep1}K\approx \bigg(\int_\rth \frac{dv}{v^0}\int_\rth \frac{dv_*}{v_*^0}\int_\rth \frac{dv'}{v'^0}\int_\rth \frac{dv'_*}{v'^0_*}\ s\sigma(g,\theta)|f(v)||\psi(v')|^{p'}(v_*^0)^{-\eta}\\\times \delta^{(4)}\left((v^0,v)+(v_*^0,v_*)-(v'^0,v')-(v'^0_*,v'_*)\right)\bigg)^{\frac{1}{p'}}.\end{multline} 
Then the use of Lemma \ref{carlemanrep} on the representation \eqref{Kstep1} admits a new representation of $K$ of
$$K\approx \bigg(\int_\rth \frac{dv}{v^0}\int_\rth \frac{dv'}{v'^0}\int_{\rth} \frac{d\pi_{v_*}}{v_*^0}\ \frac{s}{2\bar{g}}\sigma(g,\theta)|f(v)||\psi(v')|^{p'}(v_*^0)^{-\eta}\bigg)^{\frac{1}{p'}},$$where the Lebesgue measure on the hypersurface $E^{v_*}_{v'-v}$ is defined as $$d\pi_{v_*}\eqdef dv_*\ u(v^0+v_*^0-v'^0)\delta\left(\frac{\bar{g}}{2}+\frac{-v_*^0(v^0-v'^0)+v_*\cdot(v-v')}{\bar{g}}\right),$$ and $u(x)=1$ if $x\geq 1$ and $=0$ otherwise. This representation is what we call the relativistic Carleman representation of $K$.
 Since Corollary \ref{barg} additionally states that $\sigma(g,\theta)\approx \bar{g}$, we also obtain that the integral $K$ is equivalent to
$$K\approx \bigg(\int_\rth \frac{dv}{v^0}\int_\rth \frac{dv'}{v'^0}\int_{\rth} \frac{d\pi_{v_*}}{v_*^0}\ s|f(v)||\psi(v')|^{p'}(v_*^0)^{-\eta}\bigg)^{\frac{1}{p'}}.$$ 

In order to achieve an upper-bound estimate for the new represenation $K$, we first take a change of variables on $v_*$ into angular coordinates as $v_*\in \rth \mapsto (r,\theta,\phi)$ and choose the z-axis parallel 
	to $v-v'$ such that the angle between $v_*$ and $v-v'$ is equal to $\phi.$   The terms in the delta function can be rewritten as
	\begin{multline*}
		\frac{\bar{g}}{2}+\frac{-v_*^0(v^0-v'^0)+v_*\cdot(v-v')}{\bar{g}}=\frac{1}{2\bar{g}}(\bar{g}^2+2(-v_*^0(v^0-v'^0)+v_*\cdot(v-v')))\\
		=\frac{1}{2\bar{g}}(\bar{g}^2-2\sqrt{1+r^2}(v^0-v'^0)+2r|v-v'|\cos\phi).	
	\end{multline*}
		Thus, we obtain that 
			\begin{multline*}	
			\int_{\rth}\frac{d\pi_{v_*}}{v_*^0} s(v_*^0)^{-\eta}\lesssim v^0\int_{\rth}d\pi_{v_*}(v_*^0)^{-\eta } \\
			\approx v^0\int_{0}^\infty dr(\sqrt{1+r^2})^{-\eta }\int_{0}^\pi d\phi \int_{0}^{2\pi}d\theta\  r^2\sin\phi\hspace{1mm}u(v^0+\sqrt{1+r^2}-v'^0)\\\times \delta\left(\frac{1}{2\bar{g}}(\bar{g}^2-2\sqrt{1+r^2}(v^0-v'^0)+2r|v-v'|\cos\phi)\right) \\
			\approx  v^0\int_{0}^\infty dr (\sqrt{1+r^2})^{-\eta }\int_{-1}^1 d(-\cos\phi) \int_{0}^{2\pi}d\theta\  r^2\hspace{1mm}u(v^0+\sqrt{1+r^2}-v'^0)\\\times \frac{\bar{g}}{r|v-v'|}\delta\left(\cos\phi+\frac{\bar{g}^2-2\sqrt{1+r^2}(v^0-v'^0)}{2r|v-v'|}\right) \\
			\lesssim  v^0\int_{0}^\infty dr (\sqrt{1+r^2})^{-\eta }r\int_{0}^{2\pi}d\theta \ u(v^0+\sqrt{1+r^2}-v'^0) \frac{\bar{g}}{|v-v'|} \\
			\lesssim  v^0\int_{0}^\infty dr (\sqrt{1+r^2})^{-\eta }r \lesssim  v^0,
			\end{multline*}
				where we have used $s\lesssim v^0v_*^0$, $|u(x)|\leq 1$, $\bar{g}\leq |v-v'|$, and $\eta > 2$.  Therefore, we conclude that
				$$K\lesssim \left(\int_\rth \frac{dv}{v^0}\int_\rth \frac{dv'}{v'^0} v^0|f(v)||\psi(v')|^{p'}\right)^{\frac{1}{p'}}\lesssim \|f\|^{\frac{1}{p'}}_{L^1}\|\psi\|_{L^{p'}}.$$ Then we use $||\psi||_{L^{p'}}\leq 1$ and obtain
				$$K\lesssim \left\| f\right\|_{L^1}^{\frac{1}{p'}}.$$ Since $\frac{1}{p}+\frac{1}{p'}=1$, this completes the proof by \eqref{middlestep}.
\end{proof}This theorem immediately implies the following corollary:
\begin{corollary}
Choose and fix any $q\in(1,+\infty)$. Then we have
\begin{equation}\label{L1bound}
||Q^+(f,h)||_{L^1}\lesssim  ||f||_{L^1_{\frac{1}{2}}}||h||_{L^1_{\frac{1}{2}}}\lesssim ||f||_{L^1_{\frac{1}{2}}}||h||_{L^{1}_{(2q-1)\eta+\frac{1}{2} }},
\end{equation}and \begin{equation}\label{Linftybound}
||Q^+(f,h)||_{L^{2q}}\lesssim ||f||_{L^1_{\frac{1}{2}}}||h||_{L^{2q}_{(2q-1)\eta+\frac{1}{2} }}.
\end{equation}\end{corollary}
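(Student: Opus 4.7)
The corollary is an essentially immediate consequence of Theorem \ref{Lp estimate Q+}, obtained by specializing $p$ and invoking the monotonicity of weighted $L^1$ norms. My plan is the following.

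First, for the estimate \eqref{L1bound}, I would apply Theorem \ref{Lp estimate Q+} with $p=1$, which makes the $h$-weight in the theorem equal to $\eta(p-1)+\tfrac{1}{2}=\tfrac{1}{2}$; this directly yields
$$\|Q^+(f,h)\|_{L^1}\lesssim \|f\|_{L^1_{1/2}}\|h\|_{L^1_{1/2}}.$$
The second inequality in \eqref{L1bound} is then just the trivial monotonicity
$$\|h\|_{L^1_{1/2}}=\int_{\rth}(v^0)^{1/2}|h|\,dv\leq \int_{\rth}(v^0)^{(2q-1)\eta+1/2}|h|\,dv=\|h\|_{L^1_{(2q-1)\eta+1/2}},$$
valid because $(2q-1)\eta+\tfrac{1}{2}\geq \tfrac{1}{2}$ whenever $q\geq 1$ and $\eta>0$, and $(v^0)\geq 1$.

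Second, for the estimate \eqref{Linftybound}, I would apply Theorem \ref{Lp estimate Q+} with $p=2q$. The $h$-weight prescribed by the theorem then becomes $\eta(2q-1)+\tfrac{1}{2}=(2q-1)\eta+\tfrac{1}{2}$, and the bound reads exactly
$$\|Q^+(f,h)\|_{L^{2q}}\lesssim \|f\|_{L^1_{1/2}}\|h\|_{L^{2q}_{(2q-1)\eta+1/2}},$$
which is \eqref{Linftybound}.

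Since both bounds follow from a direct substitution of $p$ into the already established inequality, there is no real obstacle; the only minor thing to verify is the trivial weight monotonicity used to upgrade the $h$-norm in \eqref{L1bound} to the uniform weight $(2q-1)\eta+\tfrac{1}{2}$ that matches \eqref{Linftybound}. Having the same $h$-weight in both endpoints $p=1$ and $p=2q$ is precisely what is needed in the next step of the paper, where a Riesz--Thorin interpolation between these two endpoints will be performed.
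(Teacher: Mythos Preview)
Your proposal is correct and matches the paper's own proof, which simply states that the corollary is a direct consequence of Theorem \ref{Lp estimate Q+} with $p=1$ and $p=2q$. Your added remark on the weight monotonicity for the second inequality in \eqref{L1bound} is exactly the right justification for that step.
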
\begin{proof}
This is a direct consequence of Theorem \ref{Lp estimate Q+} with $p=1$ and $p=2q$. 
\end{proof}
We can now use the Riesz-Thorin interpolation theorem to obtain the following improved estimate on the gain operator $Q^+$:
\begin{corollary}\label{Q+bound}
Let $q>1$. For $\eta >2$, we have
\begin{equation}\label{rieszthorin}
||Q^+(f,h)||_{L^q}\lesssim ||f||_{L^1_{\frac{1}{2}}}||h||_{L^n_{(2q-1)\eta+\frac{1}{2} }},\end{equation}
where
\begin{equation}n=\left\lbrace\begin{aligned}
&\frac{5q}{3+2q}\qquad \text{if}\ q\in(1,6],\\ &\frac{q(q-3)}{2q-3}\qquad \text{if}\ q\in [6,+\infty).
\end{aligned}\right.
\end{equation}
\end{corollary}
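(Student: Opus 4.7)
The plan is to apply weighted Riesz-Thorin complex interpolation to the linear operator $T_f \eqdef Q^+(f,\cdot)$, with $f$ held fixed, combining the three endpoint estimates already at our disposal: (E1) $\|T_f h\|_{L^6} \lesssim \|f\|_{L^1_{1/2}} \|h\|_{L^2}$ from Corollary \ref{Gain of integrability} (using $\|f\|_{L^1} \leq \|f\|_{L^1_{1/2}}$ since $v^0 \geq 1$); (E2) $\|T_f h\|_{L^1} \lesssim \|f\|_{L^1_{1/2}} \|h\|_{L^1_{1/2}}$ from \eqref{L1bound}; and (E3) $\|T_f h\|_{L^{2q}} \lesssim \|f\|_{L^1_{1/2}} \|h\|_{L^{2q}_{(2q-1)\eta+1/2}}$ from \eqref{Linftybound}. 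I pair (E2) with (E1) in the regime $q \in (1,6]$ and (E1) with (E3) in the regime $q \in [6,\infty)$.

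The weighted Riesz-Thorin rule produces, from endpoints $T:L^{p_0}_{k_0}\to L^{q_0}$ and $T:L^{p_1}_{k_1}\to L^{q_1}$, the bound $T:L^{p_\theta}_{k_\theta}\to L^{q_\theta}$ with
\[
\frac{1}{p_\theta} = \frac{1-\theta}{p_0} + \frac{\theta}{p_1}, \qquad \frac{1}{q_\theta} = \frac{1-\theta}{q_0} + \frac{\theta}{q_1}, \qquad \frac{k_\theta}{p_\theta} = (1-\theta)\frac{k_0}{p_0} + \theta\,\frac{k_1}{p_1}.
\]
For $q \in (1,6]$, choosing $\theta = \frac{6(q-1)}{5q}$ so that $q_\theta = q$, and applying the rule to $(p_0,k_0)=(1,\tfrac{1}{2})$ and $(p_1,k_1)=(2,0)$, yields $p_\theta = \frac{5q}{3+2q} = n$ and $k_\theta = \frac{6-q}{2(2q+3)} \in [0,\tfrac{1}{2}]$. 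For $q \in [6,\infty)$, the choice $\theta = \frac{q-6}{q-3}$ applied to $(p_0,k_0)=(2,0)$ and $(p_1,k_1)=(2q,(2q-1)\eta+\tfrac{1}{2})$ gives $p_\theta = \frac{q(q-3)}{2q-3} = n$ and $k_\theta = \frac{q-6}{2(2q-3)}\bigl((2q-1)\eta + \tfrac{1}{2}\bigr)$, whose prefactor lies in $[0,\tfrac{1}{4})$.

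In both regimes the interpolated weight satisfies $k_\theta \leq (2q-1)\eta + \tfrac{1}{2}$, so the pointwise inequality $(v^0)^{k_\theta} \leq (v^0)^{(2q-1)\eta + 1/2}$ (valid since $v^0 \geq 1$) delivers $\|h\|_{L^n_{k_\theta}} \leq \|h\|_{L^n_{(2q-1)\eta + 1/2}}$, which is precisely the estimate claimed in \eqref{rieszthorin}.

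The principal technical subtlety is that the textbook Riesz-Thorin statement is formulated for a single underlying measure, whereas our two endpoints carry different weights on the domain of $h$. This is resolved by appealing to the weighted complex-interpolation identity $[L^{p_0}(w_0\,d\lambda), L^{p_1}(w_1\,d\lambda)]_\theta = L^{p_\theta}(w_\theta\,d\lambda)$ with $w_\theta^{1/p_\theta} = w_0^{(1-\theta)/p_0} w_1^{\theta/p_1}$, or equivalently by applying Stein's interpolation theorem to the analytic family $T_z h \eqdef T_f\bigl((v^0)^{-\alpha(z)} h\bigr)$ with $\alpha$ affine in $z$ chosen so as to absorb the weights at both endpoints. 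Once this interpolation tool is in hand, the corollary reduces to the elementary arithmetic computations displayed above.
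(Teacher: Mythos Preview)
Your proposal is correct and follows the same route as the paper: Riesz--Thorin interpolation of $h\mapsto Q^+(f,h)$ between \eqref{L1bound} and Corollary~\ref{Gain of integrability} for $q\in(1,6]$, and between Corollary~\ref{Gain of integrability} and \eqref{Linftybound} for $q\in[6,\infty)$. Your write-up is in fact more careful than the paper's one-line proof: you compute the interpolated weight $k_\theta$ explicitly and check $k_\theta\le(2q-1)\eta+\tfrac12$, and you flag the weighted-interpolation issue (different domain weights at the two endpoints), which the paper leaves implicit but which is indeed handled either by Stein--Weiss interpolation or, more simply, by first weakening the $L^2\to L^6$ endpoint to $L^2_{(2q-1)\eta+1/2}\to L^6$ (valid since $v^0\ge 1$) and then applying ordinary Riesz--Thorin with the fixed domain measure $(v^0)^{(2q-1)\eta+1/2}\,dv$.
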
\begin{proof}
This follows by the Riesz-Thorin interpolation theorem between \eqref{L1bound} and \eqref{Gain of integrability} for the case $q\in (1,6]$ and between \eqref{Linftybound} and \eqref{Gain of integrability} for the case $q\in [6,+\infty)$.
\end{proof}
\section{Propagation of $L^p$ estimates}\label{Section proof of the main theorem}
This section is devoted to prove the propagation of $L^p$ integrability of the solutions to the spatially homogeneous relativistic Boltzmann equation. We may now prove our main theorem.
\begin{proof}[Proof of Theorem \ref{main}]
We start with multiplying $f^{p-1}$ to \eqref{homorelbol}. Then we obtain
\begin{equation}\label{mainineq1}
\frac{1}{p}\frac{d}{dt}||f||^p_{L^p}\leq \int_\rth  f^{p-1}Q^+(f,f)dv -\int_\rth f^{p-1}Q^-(f,f)dv.
\end{equation}
By Lemma \ref{lowerL}, we further have 
\begin{equation}
\label{mainineq2} -\int_\rth f^{p-1}Q^-(f,f)dv\leq -C_u\int_\rth (v^0)f^pdv =-C_u ||f||^p_{L^p_{1}}.
\end{equation}
On the other hand, by the H\"older inequality and Corollary \ref{Q+bound} with $m=p$, we have
$$
\int_\rth f^{p-1}Q^+(f,f)dv \leq \left\|f^{p-1}\right\|_{L^{p'}}\left\|Q^+(f,f)\right\|_{L^p}
\lesssim \left\|f\right\|^{p-1}_{L^{p}}||f||_{L^1_{\frac{1}{2}}}||f||_{L^n_{(2p-1)\eta+\frac{1}{2}}},
$$
where
\begin{equation}n=\left\lbrace\begin{aligned}
&\frac{5p}{3+2p}\qquad \text{if}\ p\in(1,6],\\ &\quad \frac{p(p-3)}{2p-3}\qquad \text{if}\ p\in [6,+\infty).
\end{aligned}\right.
\end{equation}Since the $L^n$ norm still contains some positive weights, we interpolate with the stronger norms. Interpolation between the Lebesgue spaces and the weights gives:
\begin{equation}
||f||_{L^n_{(2p-1)\eta+\frac{1}{2}}}\leq ||f||^\vartheta_{L^1_{\frac{1}{n}\left((2p-1)\eta+\frac{1}{2}\right)}}||f||^{1-\vartheta}_{L^p}, 
\end{equation}where we use  $\frac{1}{n}=\frac{\vartheta}{1}+\frac{1-\vartheta}{p}$ with 
\begin{equation}\vartheta =\left\lbrace\begin{aligned}
&\ \ \ \frac{2}{5}\qquad \text{if}\ p\in(1,6],\\ & \ \frac{p}{(p-1)(p-3)}\qquad \text{if}\ p\in [6,+\infty).
\end{aligned}\right.\end{equation}
Therefore, we have
\begin{equation}\label{mainineq3}\begin{split}
\int_\rth f^{p-1}Q^+(f,f)dv
&\lesssim ||f||_{L^1_{\frac{1}{2}}}||f||_{L^1_{\frac{1}{n}\left((2p-1)\eta+\frac{1}{2}\right)}} \|f\|^{p-\vartheta}_{L^{p}}\\&\lesssim||f_0||^2_{L^1_{\frac{1}{n}\left((2p-1)\eta+\frac{1}{2}\right)}} \|f\|^{p-\vartheta}_{L^{p}},\end{split}
\end{equation}where we used the propagation of $L^1$ moments with polynomial weights of arbitrary nonnegative powers \cite[Theorem 5.2, (2)]{MR3166961}. Here, the number of weights $m$ is defined as 
\begin{equation}\label{mdef}m\eqdef \frac{1}{n}\left((2p-1)\eta+\frac{1}{2}\right)=\left\lbrace\begin{aligned}
&\frac{(3+2p)(2p-1)}{5p}\eta + \frac{3+2p}{10p},\quad \text{if}\ p\in(1,6],\\ &\frac{(2p-3)(2p-1)}{p(p-3)}\eta+\frac{2p-3}{2p(p-3)},\quad \text{if}\ p\in [6,+\infty).
\end{aligned}\right.
\end{equation}Then, the theorem follows by plugging \eqref{mainineq2} and \eqref{mainineq3} into \eqref{mainineq1}.
\end{proof}

\noindent{\bf Acknowledgements} J. W. Jang was supported by
the Korean IBS project IBS-R003-D1 and was partially supported by the Junior Trimester Program ``Kinetic Theory" of the Hausdorff Research Institute for Mathematics. S.-B. Yun is supported by Samsung Science and Technology Foundation under Project Number SSTF-BA1801-02.    


\bibliographystyle{plain}


\bibliography{bibliography}{}





\end{document}